\def\R{\mathbb{R}}
\newtheorem{theorem}{Theorem}[section]
\newtheorem{lemma}[theorem]{Lemma}
\newtheorem{corollary}[theorem]{Corollary}
\newcommand{\be}{\begin{equation}}
\newcommand{\ee}{\end{equation}}
\newcommand{\bea}{\begin{eqnarray}}
\newcommand{\eea}{\end{eqnarray}}
\newcommand{\beas}{\begin{eqnarray*}}
\newcommand{\eeas}{\end{eqnarray*}}
\begin{document}

\title{Connected Components of Underlying Graphs of Halving Lines}

\author{Tanya Khovanova\\MIT \and Dai Yang\\MIT}

\maketitle

\begin{abstract}
In this paper we discuss the connected components of underlying graphs of halving lines' configurations. We show how to create a configuration whose underlying graph is the union of two given underlying graphs. We also prove that every connected component of the underlying graph is itself an underlying graph.
\end{abstract}

\section{Introduction}

Halving lines have been an interesting object of study for a long time. Given $n$ points in general position on a plane the minimum number of halving lines is $n/2$. The maximum number of halving lines is unknown. The current lower bound of $O(ne^{\sqrt{\log n}})$ is found by Toth \cite{Toth}.

The current asymptotic upper bound of $O(n^{4/3})$ is proven by Dey \cite{Dey98}. In 2006 a tighter bound for the crossing number was found \cite{PRTT}, which also improved the upper bound for the number of halving lines. In our paper \cite{KY} we further tightened the Dey's bound. This was done by studying the properties of the underlying graph. 

In this paper we concentrate on the underlying graphs and properties of its connected components. 

In Section~\ref{sec:union} we use the cross construction to show how to sum two underlying graphs. In Section~\ref{sec:subtraction} we show that any connected component of the underlying graph is realizable as an underlying graphs of the halving lines of its vertices. 

\section{Definitions}

Let $n$ points be in general position in $\R^2$, where $n$ is even. A \textit{halving line} is a line through 2 of the points that splits the remaining $n-2$ points into two sets of equal size.

From our set of $n$ points, we can determine an \textit{underlying graph} of $n$ vertices, where each pair of vertices is connected by an edge if and only if there is a halving line through the corresponding 2 points.

In dealing with halving lines, we consider notions from both Euclidean geometry and graph theory. We define a \textit{geometric graph}, or \textit{geograph} for short, to be a pair of sets $(V,E)$, where $V$ is a set
of points on the coordinate plane, and $E$ consists of pairs of elements from
$V$. In essence, a geograph is a graph with each of its vertices assigned
to a distinct point on the plane.

\subsection{Examples}

\subsubsection{Four points}

Suppose we have four non-collinear points. If their convex hull is a quadrilateral, then there are two halving lines. If their convex hull is a triangle, then there three halving lines. Both cases are shown on Figure~\ref{fig:4points}.

\begin{figure}[htbp]
\begin{center}
\includegraphics[scale=0.5]{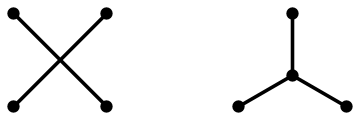}
\end{center}
  \caption{Underlying graphs for four points.}\label{fig:4points}
\end{figure}

\subsubsection{Polygon}\label{sec:polygon}

If all points belong to the convex hull of the point configuration, then each point lies on exactly one halving line. The number of halving lines is $n/2$, and the underlying graph is a matching graph --- a union of $n/2$ disjoint edges. The left side of Figure~\ref{fig:4points} shows an example of this configuration. 

For any point configuration there is at least one halving line passing through each vertex. Hence, the polygon provides an example of the minimum number of halving lines, and an example of the most number of disconnected components.

\section{Union of Connected Components}\label{sec:union}

Given two underlying graphs of two halving lines configurations, the following construction allows to create a new halving line configuration whose underlying graph consists of two given graphs as connected components.

\subsection{Cross}

The following construction we call a \textit{cross}. Given two sets of points, with $n_1$ and $n_2$ points respectively whose underlying graphs are $G_1$ and $G_2$, the cross is the construction of $n_1+n_2$ points on the plane whose underlying graph has two isolated components $G_1$ and $G_2$. 

We squeeze the initial sets of points in $G_1$ and $G_2$ into long narrow segments, a process called segmentarizing (see \cite{KY}). Note that segmentarizing is an affine transform, and does not change which pairs of points form halving lines. Then we intersect these segments in such a way that the halving lines of $G_1$ split the vertices of $G_2$ into two equal halves, and vice versa (See Figure~\ref{fig:cross}).

\begin{figure}[htbp]
\begin{center}
\includegraphics[scale=0.5]{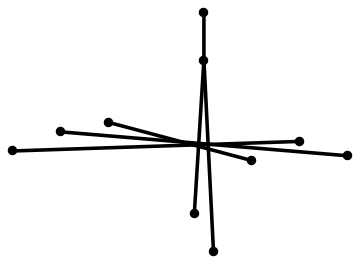}
\end{center}
  \caption{The Cross construction.}\label{fig:cross}
\end{figure}

With respect to geographs, the image of the cross construction depends on the precise manner in which $G_1$ and $G_2$ are segmentarized and juxtaposed. However, with respect to underlying graphs, the cross construction defines an associative and commutative binary operation.

Our Polygon example in subsection~\ref{sec:polygon} can be viewed as the cross construction of a 2-path graph with itself many times.

It is interesting to note that in the cross construction the halving lines of one component divide the points of the other component into the same halves. It is not necessarily so. Two connected components can interact in a way different from a cross as seen in Figure~\ref{fig:noncross}.

\begin{figure}[htbp]\label{fig:noncross}
\begin{center}
\includegraphics[scale=0.3]{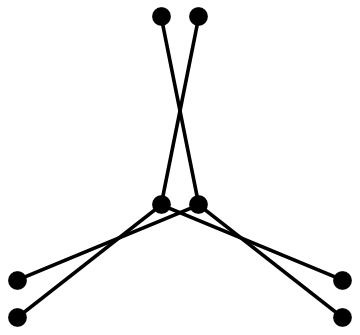}
\end{center}
  \caption{Two connected components that are not formed through the cross construction.}\label{fig:NonCrossSum}
\end{figure}

\section{Decomposition of Connected Components}\label{sec:subtraction}

We will now prove that graph composition has an inverse of sorts, namely that we can subtract disconnected components of an underlying graph. We will show that a connected component of the underlying graph is itself an underlying graph.

But before doing so, we will introduce some definitions.

Given a set of points $G$ and a directed line, we can orient $G$ and pick a direction to be North; and thus, we can define the East and the West half of the plane. We define the \textit{$G$-balance} of the line to be the difference between the number of West points and East points in $G$. Similarly, we can define the \textit{$G$-balance} of two points as the $G$-balance of the line through them. It is often does not matter which direction is chosen as North, but it is important that when we move a variable line, the two sides of the line move accordingly.

Let $A$ be a union of connected subcomponents in $G$. We will prove that the halving lines of $G$ that are formed by points in $A$ are also halving lines in $A$.

\begin{theorem}\label{thm:preserved}
If $A$ is a union of some connected subcomponents in $G$, then for every pair of points in $A$ forming a halving line in $G$, their $A$-balance is zero.
\end{theorem}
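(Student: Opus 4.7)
The plan is to induct on $|G \setminus A|$. The base case $|G \setminus A| = 0$ is immediate: $A = G$ forces the $A$-balance of any halving line to equal its $G$-balance, which vanishes by the halving condition.

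For the inductive step, set $B := G \setminus A$ and write $B = C_1 \sqcup \cdots \sqcup C_k$ as a disjoint union of connected components. The definition of balance gives the identity
\[
A\text{-balance}(\ell) + \sum_{j=1}^{k} C_j\text{-balance}(\ell) = G\text{-balance}(\ell) = 0,
\]
where the last equality uses that $\ell$ is a halving line of $G$. In the main case $k \geq 2$, for each $j$ the set $A_j := G \setminus C_j$ is again a union of connected components of $G$ that still contains both endpoints of $\ell$, and $|G \setminus A_j| = |C_j| < |B|$, so the inductive hypothesis gives $A_j\text{-balance}(\ell) = 0$. Subtracting from the displayed identity forces $C_j\text{-balance}(\ell) = 0$ for every $j$, whence $A\text{-balance}(\ell) = 0$.

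The residual case is $k = 1$: $B$ is itself a single connected component and the inductive trick fails, since $|G \setminus A|$ cannot be lowered by peeling off a proper sub-component of $B$. Here I would switch to a rotating-line argument. Fix $p \in A$, parameterize a directed line through $p$ by $\theta \in [0, 2\pi)$, and let $f_A(\theta)$, $f_B(\theta)$, $f_G(\theta) = f_A(\theta) + f_B(\theta)$ denote the signed (left-minus-right) balances over $A \setminus \{p\}$, $B$, and $G \setminus \{p\}$ respectively. Halving lines through $p$ correspond to transitions of $f_G$ between $\pm 1$ and $\mp 1$ at an angle $\phi_r$; the component hypothesis forces every such transition to occur at an $A$-angle (so $r \in A$), across which $f_B$ is continuous. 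At $\theta_0 = \phi_q$ the halving identity $f_G(\theta_0) = 0$ gives $f_A(\theta_0) = -f_B(\theta_0)$, so the desired $A\text{-balance}(\ell) = f_A(\theta_0) = 0$ reduces to $f_B(\theta_0) = 0$.

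The main obstacle is proving $f_B(\theta_0) = 0$ in this single-component sub-case. The constraint that $f_G$ never crosses zero at any $B$-angle, combined with the antipodal symmetry $f_B(\theta + \pi) = -f_B(\theta)$, places strong rigidity on how $f_B$ can evolve across the maximal sign-intervals of $f_G$; the plan is to convert this rigidity into the statement that $f_B$ vanishes at every halving transition. This last step, which must translate the graph-theoretic separation of $A$ and $B$ into a geometric one, is where I expect the proof to require the most care.
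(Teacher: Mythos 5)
Your reduction to the case where the complement $B=G\setminus A$ is a single connected component is correct: the additivity of balance over a partition into components, together with induction on $|G\setminus A|$, validly peels off the case $k\ge 2$. But that reduction carries essentially none of the theorem's content --- the whole difficulty sits in the residual case $k=1$ (already present when $G$ has just two components), and there you give only a plan, not a proof. The two facts your sketch isolates are not enough to conclude. The antipodal antisymmetry $f_B(\theta+\pi)=-f_B(\theta)$ holds for \emph{any} finite point set and any center of rotation, so it encodes nothing about the component structure; and the statement that $f_G$ only transitions between $\pm 1$ at $A$-angles constrains the sum $f_A+f_B$ only in a neighborhood of the value $\pm 1$, leaving $f_B(\theta_0)$ undetermined (nothing in these constraints rules out, say, $f_B\equiv 2$ near $\theta_0$ with $f_A$ jumping from $-3$ to $-1$). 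The deeper problem is that by rotating about a single fixed point $p\in A$, you only ever exploit the hypothesis ``the other endpoint of a halving line through a point of $A$ lies in $A$'' for lines through $p$; the theorem needs this hypothesis applied at \emph{arbitrary} points of $A$.

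The paper's argument supplies exactly the missing idea. Order all $k$ halving lines of $G$ with both endpoints in $A$ counter-clockwise as $l_1,\dots,l_k$, and for consecutive $l_i,l_{i+1}$ consider the open region $R_i$ swept when $l_i$ rotates into $l_{i+1}$ about their intersection. If some $P\in A$ lay in $R_i$, then rotating a line $m$ through $P$ from parallel-to-$l_i$ to parallel-to-$l_{i+1}$ forces the $G$-balance of $m$ to change sign, so by continuity some intermediate position of $m$ is a halving line of $G$ through $P$; since $P\in A$ and $A$ is a union of components, that halving line has both endpoints in $A$ and lies strictly between $l_i$ and $l_{i+1}$ in the angular order --- a contradiction. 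Hence no point of $A$ is swept, the $A$-balance is invariant along the full $180^\circ$ rotation, and since that rotation negates the $A$-balance by definition, it must be zero. If you want to salvage your $\theta$-parameterization, you would have to run this intermediate-value argument at every candidate point $P$ of $A$ in the swept sectors, at which point you have reproduced the paper's proof; as written, your $k=1$ case is a genuine gap.
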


\begin{proof}
Let $A$ contain $k$ of the halving lines of $G$. Label these lines $l_1,l_2,...,l_k$ by order of counter-clockwise orientation. For any two such lines $l_i,l_{i+1}$, there is a unique rotation of at most $180$ degrees about their point of intersection that maps $l_i$ to $l_{i+1}$, where the indices are taken mod $k$. Define $R_i$ to be the open region swept by $l_i$ as it moves into $l_{i+1}$ under this rotation. Note that each $R_i$ consists of two symmetric unbounded sectors.

We claim that there are no vertices of $A$ lying in any of the $R_i$. Assume that $R_i$ contains a vertex $P$ of $A$. Draw the lines through $P$ parallel to $l_i$ and $l_{i+1}$, and call them $m_i$ and $m_{i+1}$ respectively, see Figure~\ref{fig:balance}. Note that neither $m_i$ nor $m_{i+1}$ are halving lines of $G$. Take a variable line $m$ through $P$ to initially coincide with $m_i$, and rotate it counter-clockwise until it coincides with $m_{i+1}$. The side of $m_i$ that contains $l_i$ has more points of $G$ than the other side. Similarly, the side of $m_{i+1}$ that contains $l_{i+1}$ has more points than the other side. As $m$ rotates its $G$-balance will change sign, so by continuity, $m$ coincides with a halving line during this rotation, a halving line which should occur between $l_i$ and $l_{i+1}$ in the counter-clockwise ordering of halving lines. As one point on the line, namely $P$, belongs to $A$, the other point of this halving line also belongs to $A$. Hence, $R_i$ can not contain a vertex of $A$. 

\begin{figure}[htbp]
\begin{center}
\includegraphics[scale=0.4]{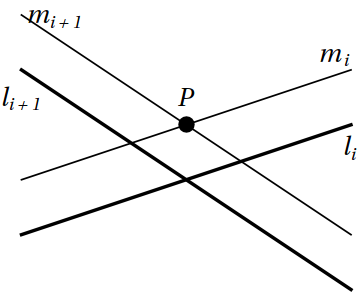}
\end{center}
  \caption{No points of $A$ exist in the regions $R_i$.}\label{fig:balance}
\end{figure}

As $l_i$ rotates into $l_{i+1}$, it does not sweep across any points of $A$ along the way, and furthermore the points of $A$ on $l_i$ or $l_{i+1}$ do not affect the net $A$-balance of these lines. When the line $l_1$ completes its $180^\circ$ rotation, its $A$-balance does not change due to the above argument, but by definition it should be negated, so it must be zero.
\end{proof}

\begin{corollary}\label{thm:componentmix}
If an underlying geograph $G$ is composed of disconnected components $A$ and $B$, then every halving line in $A$ divides points in $B$ in half, and vice versa.
\end{corollary}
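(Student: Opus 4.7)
The plan is to deduce this corollary directly from Theorem~\ref{thm:preserved}. The key observation is that when $A$ and $B$ are the two disconnected components of $G$, they partition the vertex set of $G$, so how an arbitrary halving line of $G$ splits $A$ completely determines how it splits $B$.

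Concretely, I would first pick any halving line $\ell$ of $G$ whose two endpoints belong to $A$. Since $A$ is itself a union of connected subcomponents of $G$, Theorem~\ref{thm:preserved} applies and tells me that the $A$-balance of $\ell$ is zero; equivalently, the $|A|-2$ points of $A$ not lying on $\ell$ split evenly, with $(|A|-2)/2$ on each side. Next I would invoke the assumption that $\ell$ is a halving line of $G$, which means that the remaining $|A|+|B|-2$ points of $G$ split evenly across $\ell$, with $(|A|+|B|-2)/2$ on each side. Subtracting the $A$-contribution from each side forces the $|B|$ points of $B$ to split with $|B|/2$ on each side, so $\ell$ does divide $B$ in half. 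Exchanging the roles of $A$ and $B$ handles halving lines whose endpoints lie in $B$.

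The entire substance of the corollary is already packaged in Theorem~\ref{thm:preserved}; there is no real obstacle beyond this short counting argument, and I do not foresee any hidden difficulty. The only sanity check worth flagging is parity: the theorem's conclusion that the $A$-balance is zero implicitly requires $|A|$ to be even, and then $|B| = n - |A|$ is automatically even as well, so $|B|/2$ is a genuine integer and the statement ``$\ell$ divides $B$ in half'' makes sense.
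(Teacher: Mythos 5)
Your proof is correct and matches the paper's intent exactly: the paper states this as an immediate corollary of Theorem~\ref{thm:preserved} without writing out a proof, and the intended derivation is precisely your counting argument (the $A$-balance is zero by the theorem, the total balance is zero because the line is a halving line of $G$, so the $B$-balance is zero). Your parity remark is a reasonable sanity check and does not indicate any gap.
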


Given a geograph and a fixed orientation such that no edges are vertical, we can denote the \textit{left-degree} and \textit{right-degree} of a given vertex as the number of edges emanating from the left and the right of that vertex respectively. Then the following result follows from the existence of structures called chains found on any oriented underlying geograph \cite{Dey98}, \cite{KY}. We will not discuss the definition of chains here. We will only mention that each chain is a subpath in the underlying graph that travels from left-to-right. We note that chains have the following properties:

\begin{itemize}
\item A vertex on the left half of the underlying graph is a left endpoint of a chain.
\item A vertex on the right half of the underlying graph is a right endpoint of a chain.
\item Every vertex is the endpoint of exactly one chain.
\item Every halving line is part of exactly one chain.
\end{itemize}

Now we are ready to prove the following lemma.

\begin{lemma}\label{thm:leftright}
Let $G$ be an underlying geograph with a fixed orientation. If $v$ is a vertex appearing among the left half (right half) of $G$, then the right-degree (left-degree) of $v$ is one more than the left-degree (right-degree) of $v$.
\end{lemma}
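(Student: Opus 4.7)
The plan is to use the chain structure as a black box and reduce the lemma to a counting argument at the vertex $v$. By hypothesis, every halving line is part of exactly one chain, and every vertex is an endpoint of exactly one chain; moreover chains travel strictly from left to right. So I would classify the chains incident to $v$ according to how they meet $v$.

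First I would show the local contribution of each chain to the degrees at $v$. If a chain has $v$ as an interior vertex, then because the chain is a left-to-right path and visits $v$ exactly once, it must use exactly one edge going left from $v$ and one edge going right from $v$; so such a chain contributes $1$ to the left-degree and $1$ to the right-degree. If a chain has $v$ as a left endpoint, it uses exactly one edge at $v$, and by left-to-right orientation this edge goes to the right; so it contributes $0$ to the left-degree and $1$ to the right-degree. Symmetrically, a chain with $v$ as a right endpoint contributes $1$ to the left-degree and $0$ to the right-degree.

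Next I would apply the chain axioms listed in the excerpt. Suppose $v$ lies in the left half of $G$. Then by the first three bulleted properties, $v$ is the left endpoint of exactly one chain and the right endpoint of no chain; all remaining chains that use an edge at $v$ treat $v$ as an interior vertex. Summing the local contributions from the previous paragraph, letting $t$ be the number of chains that meet $v$ as an interior vertex, we get
\[
\text{left-degree}(v) = t, \qquad \text{right-degree}(v) = t + 1,
\]
which is precisely the desired identity. The case $v$ on the right half is completely symmetric, with the roles of left and right swapped.

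The only substantive obstacle is justifying the local contribution claim — specifically, that a chain passing through $v$ as an interior vertex uses exactly one incident edge on each side. This is where the "travels left-to-right" property of chains is essential: it rules out a chain from entering $v$ and leaving $v$ on the same side, and it prevents a chain from revisiting $v$. Everything else is bookkeeping against the four listed chain properties, so the proof is short once those are in hand.
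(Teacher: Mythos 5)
Your proposal is correct and follows essentially the same route as the paper: chains through $v$ as an interior vertex contribute one to each side, and the unique chain ending at $v$ must emanate rightward because $v$ lies in the left half, giving the off-by-one. Your version just makes the bookkeeping of the chain-endpoint cases slightly more explicit than the paper does.
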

\begin{proof}
If $v$ appears among the left half of the vertices of $G$, then every chain passing through $v$ contributes one left-degree and one right-degree to $v$. There is one chain with $v$ as an endpoint, and it must emanate on the right since chains cannot end among the $\frac{n}{2}$ leftmost vertices of $G$. Therefore, the right-degree of $v$ exceeds the left-degree of $v$ by one. The proof when $v$ appears among the $\frac{n}{2}$ rightmost vertices of $G$ is analogous.
\end{proof}

The previous theorems and lemmas allow us to prove our main result of this section that the subtraction works:

\begin{theorem}\label{thm:subtraction}
Suppose that an underlying geograph $G$ contains a union of connected components $A$. Then if all vertices of $G$ that do not belong to $A$ are removed, the halving lines of $A$ in $G$ are precisely the halving lines of $A$ by itself.
\end{theorem}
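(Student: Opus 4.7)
The plan is to prove the set equality $E(H_{G,A}) = E(H_A)$, where $H_A$ is the underlying graph of $A$ as a standalone point configuration and $H_{G,A}$ is the subgraph of $G$'s underlying graph induced by the vertices of $A$. One inclusion is immediate from Theorem~\ref{thm:preserved}: an edge of $H_{G,A}$ is a halving line of $G$ whose endpoints lie in $A$, so by that theorem its $A$-balance is zero, which is exactly the condition for the two endpoints to form a halving line of $A$ alone. All the real work lies in the reverse inclusion.

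For the converse I would run a degree-counting argument combining Lemma~\ref{thm:leftright} applied to $G$ and to $A$ separately. Fix an orientation so that no halving line of either $G$ or $A$ is vertical. As a preparatory step, show that the vertical line bisecting $G$ also bisects $A$. Because $A$ is a union of connected components of $G$, every halving line of $G$ through a vertex $v \in A$ has its other endpoint in $A$, so $R_G(v)$ and $L_G(v)$ coincide with the right- and left-degrees of $v$ inside $H_{G,A}$. Summing the identity $R_G(v) - L_G(v) = \pm 1$ from Lemma~\ref{thm:leftright} over $v \in A$ collapses to $|A \cap L_G| - |A \cap R_G| = 0$, which forces $|A \cap L_G| = |A \cap R_G| = |A|/2$ and hence $L_A = A \cap L_G$ and $R_A = A \cap R_G$.

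With the two notions of ``left half'' now aligned on $A$, applying Lemma~\ref{thm:leftright} to $H_A$ as well yields, for every $v \in A$, the identity $R_A(v) - L_A(v) = R_{G,A}(v) - L_{G,A}(v)$. Since $H_{G,A} \subseteq H_A$, the two nonnegative differences $R_A(v) - R_{G,A}(v)$ and $L_A(v) - L_{G,A}(v)$ are equal. Now consider the ``missing edge'' graph $M = E(H_A) \setminus E(H_{G,A})$, and orient each of its edges from its left endpoint to its right endpoint; the identity just derived says that every vertex has equal in-degree and out-degree in $M$. Because edges are oriented in strict order of $x$-coordinate, $M$ is a directed acyclic graph, and by the Eulerian decomposition any finite digraph with in-degree equal to out-degree at every vertex consists of disjoint directed cycles. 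Acyclicity then forces $M$ to be empty, finishing the proof.

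The most delicate step, I expect, is recognizing that the local balance supplied by Lemma~\ref{thm:leftright} can be converted into a global statement via acyclicity: without the planar left-to-right orientation the degree identity alone would not kill $M$. The preparatory verification that $L_A = A \cap L_G$ is less surprising but genuinely needed, since Lemma~\ref{thm:leftright} refers to the left and right halves of whichever geograph it is applied to, and the two instances on $G$ and on $A$ can only be subtracted vertex-by-vertex once those halves agree on $A$.
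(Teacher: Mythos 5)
Your proposal is correct, and both inclusions are handled soundly, but your argument for the hard direction is genuinely different from the paper's. The paper also reduces to showing that no new halving lines appear, but then argues by contradiction with two extremal vertices: taking $v$ and $w$ to be the leftmost and rightmost vertices incident to a new edge, all new edges at $v$ point right and all new edges at $w$ point left, so the quantity $R-L$ strictly increases at $v$ and strictly decreases at $w$ when passing from $G$ to $A$; since Lemma~\ref{thm:leftright} pins this quantity to $\pm 1$ in each geograph, $v$ must sit in the right half of $G$ and $w$ in the left half, contradicting that $v$ lies to the left of $w$. This is shorter than your route and, notably, never needs your preparatory step aligning the median of $A$ with the median of $G$ --- it reads off the halves of $G$ and of $A$ independently from the sign of $R-L$. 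Your global version (balanced in/out degrees on the missing-edge digraph $M$, plus acyclicity from the left-to-right orientation, forces $M=\emptyset$) trades that economy for two benefits: it avoids any case analysis on extremal vertices, and your preparatory computation that $|A\cap L_G|=|A\cap R_G|$ derives, directly from Lemma~\ref{thm:leftright} and with no circularity, a version of the paper's later Lemma 5.1 (there stated for a single component and proved only after the subtraction theorem), extended to arbitrary unions of components. Both arguments rest on exactly the same two ingredients, Theorem~\ref{thm:preserved} and Lemma~\ref{thm:leftright}.
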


\begin{proof}
Fix an orientation of $G$, and consider $A$ by itself under the same orientation. Since Lemma~\ref{thm:preserved} asserts that deleting the extra vertices preserves the existing halving lines of $A$, it suffices to show that no new halving lines are added. Assume the contrary, and call $E_A$ the set of new edges in $A$ which were not in $G$. Let $v$ and $w$ be the leftmost and rightmost vertices of $A$ with edges in $E_A$, respectively. Clearly $v$ lies to the left of $w$ in $A$, and hence in $G$ as well. Note that $v$ has a greater right-degree in $A$ than in $G$, but the same left-degree in both geographs. Therefore, by Lemma ~\ref{thm:leftright}, $v$ was not among the leftmost half of the vertices of $G$, so it must have been among the rightmost half. Similarly, $w$ has a greater left-degree in $A$ than in $G$, but the same right-degree in both geographs. Thus, $w$ must have been among the leftmost half of the vertices in $G$. But this contradicts the fact that $v$ must lie to the left of $w$ in $G$, so our statement holds.
\end{proof}

\begin{corollary}
Each connected component of an underlying geograph $G$ is itself an underlying geograph.
\end{corollary}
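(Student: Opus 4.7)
The plan is to obtain this corollary as an immediate specialization of Theorem~\ref{thm:subtraction}. A single connected component $C$ of $G$ is a (trivial) union of connected subcomponents, so the hypothesis of Theorem~\ref{thm:subtraction} is satisfied by taking $A = C$. All of the heavy lifting has already been done; the corollary is simply the one-component case.

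First I would apply Theorem~\ref{thm:subtraction} to $A = C$ to conclude that after deleting all vertices of $G$ outside of $C$, the halving lines of the resulting configuration are precisely those halving lines of $G$ that have both endpoints in $C$. Next I would observe that because $C$ is a connected component of the underlying graph, every halving line of $G$ incident to a vertex of $C$ must have its other endpoint also in $C$; otherwise $C$ would fail to be maximal. Hence the halving lines of $G$ with both endpoints in $C$ are exactly the edges of the component $C$ in the underlying graph of $G$.

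Combining these two observations, the underlying graph of the point configuration consisting of the vertices of $C$ alone is exactly the graph $C$, which is what it means for $C$ to be realizable as an underlying geograph. The only subtle point, and certainly not an obstacle, is to check that a single connected component legitimately counts as a "union of connected subcomponents" so that Theorem~\ref{thm:subtraction} applies; this is immediate from the definition.
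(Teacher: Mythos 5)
Your proposal is correct and matches the paper's (implicit) argument exactly: the corollary is stated as an immediate consequence of Theorem~\ref{thm:subtraction} with $A$ taken to be a single connected component, and your observation that all halving lines of $G$ incident to $C$ have both endpoints in $C$ (by maximality of the component) is precisely the remaining detail needed to identify the resulting underlying graph with $C$ itself.
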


\section{Properties of Connected Components}

Connected components of the underlying graph are themselves underlying graphs. Hence, the properties of the underlying graphs are shared by each component. For example, every connected component has at least three leaves. 

In addition, the properties of chains with respect to any geograph are the same as the properties of these chains with respect to the connected component they belong to.

Consequently, we present a stronger version of Corollary~\ref{thm:componentmix}. 

\begin{lemma}
In any orientation of a geograph $G$, if $C$ is a connected component of $G$, then the left half of the vertices of $C$ belong to the left half of the vertices of $G$, and the right half of the vertices of $C$ belong to the right half of the vertices of $G$.
\end{lemma}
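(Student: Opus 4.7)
The plan is to lean on the corollary immediately following Theorem~\ref{thm:subtraction}, which guarantees that the component $C$ is itself an underlying geograph, and then to apply Lemma~\ref{thm:leftright} twice: once inside $C$ and once inside the ambient $G$. The bridge between the two applications is the trivial but crucial observation that because $C$ is a full connected component, every edge of $G$ incident to a vertex $v\in C$ already lies in $C$. Consequently the left-degree and the right-degree of $v$ computed inside $C$ agree exactly with those computed inside $G$.

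With this in hand I would proceed as follows. Fix the orientation of $G$ and give $C$ the inherited orientation, so that the horizontal ordering of vertices used to define ``left half'' in $C$ is consistent with the one used in $G$. Pick an arbitrary vertex $v$ lying in the left half of $C$. Lemma~\ref{thm:leftright}, applied to the underlying geograph $C$, tells us that the right-degree of $v$ exceeds its left-degree by exactly one; by the degree-equality observed above, the very same inequality holds when the two degrees are computed in $G$. Reading Lemma~\ref{thm:leftright} as a dichotomy --- a vertex of the left half of $G$ satisfies (right-degree)$-$(left-degree)$=+1$ while a vertex of the right half satisfies the opposite --- this forces $v$ to lie in the left half of $G$. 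The argument for the right half is entirely symmetric.

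I do not foresee a genuine obstacle here; essentially all the heavy lifting has already been supplied by Theorem~\ref{thm:subtraction} and Lemma~\ref{thm:leftright}, and the new content is only the degree-equality across $C$ and $G$. The single point worth being careful about in the write-up is to state explicitly that the inherited orientation on $C$ is the same choice of North as for $G$, so that the two applications of Lemma~\ref{thm:leftright} really are comparing like with like.
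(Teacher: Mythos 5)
Your proposal is correct and matches the paper's own argument: both rely on $C$ being an underlying geograph (so Lemma~\ref{thm:leftright} applies to it), the fact that a vertex of a connected component has identical left- and right-degrees in $C$ and in $G$, and the dichotomy reading of Lemma~\ref{thm:leftright} to transfer ``left half'' membership from $C$ to $G$. No substantive difference from the published proof.
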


\begin{proof}
A vertex $v\in C$ is on the left half of $C$ or $G$ iff its right-degree is one more than its left-degree in $C$ or $G$. But the left-degree and right-degree of $v$ is the same whether we consider the entirety of $G$ or only its connected component. Hence, $v$ is on the left half of $C$ iff it is on the left half of $G$.\end{proof}

\section{Acknowledgements}

We are thankful to the UROP at MIT that provided financial support for the second author and to Professor Jacob Fox for supervising the project.


\begin{thebibliography}{9}


\bibitem{Dey98} T.~K.~Dey, Improved bounds for planar $k$-sets and related problems, \textit{Discrete and Computational Geometry}, \textbf{19} (3) (1998) 373--382. 

\bibitem{PRTT} J.~Pach, R.~Radoicic, G.~Tardos, G.~Toth, Improving the crossing lemma by finding more crossings in sparse graphs, \textit{Discrete and Computational Geometry} \textbf{36} n4, (2006), 527--552.

\bibitem{Toth} G.~Toth, Point sets with many k-sets, in Proceedings of the 16th Annual ACM Symposium on Computational Geometry (2000), 37--42.

\bibitem{KY} T.~Khovanova and D.~Yang, Halving Lines and Their Underlying Graphs, \url{http://http://arxiv.org/abs/1210.4959}, (2012).


\end{thebibliography}
\end{document}